\title{A note on evaluations of multiple zeta values}
\author{Shuichi Muneta}
\date{}
\font\xiirm=cmr12 \font\xrm=wncyr10
\newcommand{\sh}{\xrm \mbox{sh}\, \xiirm}
\newcommand{\sw}{\,\widetilde{\sh}}
\newtheorem{thm}{Theorem}[]
\newtheorem{prop}[thm]{Proposition}
\begin{document}
\maketitle

\begin{abstract}
Multiple zeta values (MZVs) with certain repeated arguments 
or certain sums of cyclically generated MZVs 
are evaluated as rational multiple of powers of $\pi^2$. 
In this paper, we give a short and simple proof 
of the remarkable evaluations of MZVs 
established by D. Borman and D. M. Bradley. 
\end{abstract}

\section{Introduction}
The multiple zeta value (MZV) is defined by the convergent series
\[\zeta( k_{1}, k_{2}, \ldots, k_{n} )
:=\sum_{ m_{1} > m_{2} > \cdots > m_{n} >0 }
\frac{1}{ m_{1}^{k_{1}} m_{2}^{k_{2}} \cdots m_{n}^{k_{n}} },\]
where $k_{1},k_{2},\ldots,k_{n}$ are positive integers and $k_{1}\geq 2$. 
The remarkable property of MZVs is that MZVs are evaluated 
for some special arguments as rational multiple of powers of $\pi^2$. 
For example, the following evaluations were proven 
by many authors ([BBB], [H1], [Z]): 
\[\zeta( \{2\}_{m} ) = \frac{ \pi^{2m} }{ (2m+1)! } 
\quad ( m \in \mathbb{Z}_{>0} ) \]
where $\{ 2 \}_{m}$ denotes the $m$-tuple $(2,2,\ldots,2)$. 
In [Z], D. Zagier conjectured the following evaluations: 
\[\zeta( \{3,1\}_{n} ) = \frac{ 2 \pi^{4n} }{ (4n+2)! } 
\quad ( n \in \mathbb{Z}_{>0}). \]
These evaluations were proved by J. M. Borwein, D. M. Bradley, 
D. J. Broadhurst and P. Lison$\mathrm{ \check{e} }$k ([BBBL1], [BBBL2]). 
In addition, D. Bowman and D. M. Bradley proved the following theorem 
which contained these results: 
\begin{thm}[{[BB]}]
\label{thm1}
For non-negative integers $m$, $n$, we have 
\begin{align*}
\lefteqn{ \sum_{\begin{subarray}{c} j_{0} + j_{1} + \cdots + j_{2n} = m \\ 
j_{0}, j_{1}, \ldots,j_{2n} \geq 0 \end{subarray} } 
\zeta(\{2\}_{j_{0}}, 3, \{2\}_{j_{1}}, 1, \{2\}_{j_{2}}, 
\ldots, \{2\}_{j_{2n-2}}, 3, \{2\}_{j_{2n-1}}, 1, \{2\}_{j_{2n}}) } \\
& \qquad\qquad = \binom{m+2n}{m} \frac{ \pi^{2m+4n} }{ (2n+1) \cdot (2m+4n+1)! }. 
\qquad\qquad\qquad\qquad
\end{align*}
\end{thm}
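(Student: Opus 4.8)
The plan is to package the whole family of identities into a single bivariate generating function, to evaluate that function using Chen's iterated-integral representation together with the Gauss summation theorem for $_2F_1$, and to recover the stated closed form by a routine coefficient comparison.

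Write $S_{m,n}$ for the sum on the left of Theorem~\ref{thm1} and put $G(x,y):=\sum_{m,n\ge0}S_{m,n}\,x^{2m}y^{4n}$. Let $a,b$ be the roots of $z^{2}-x^{2}z+\tfrac{y^{4}}{4}$, so that $a+b=x^{2}$ and $ab=\tfrac{y^{4}}{4}$. Using the Weierstrass product $\prod_{k\ge1}(1+c/k^{2})=\sinh(\pi\sqrt c)/(\pi\sqrt c)$ one gets
\[
\prod_{k\ge1}\Bigl(1+\frac{x^{2}}{k^{2}}+\frac{y^{4}}{4k^{4}}\Bigr)=\frac{\sinh(\pi\sqrt a)}{\pi\sqrt a}\cdot\frac{\sinh(\pi\sqrt b)}{\pi\sqrt b}=\frac{\cosh(\pi\sqrt{x^{2}+y^{2}})-\cosh(\pi\sqrt{x^{2}-y^{2}})}{\pi^{2}y^{2}},
\]
and expanding the last expression in powers of $x,y$ gives exactly $\sum_{m,n}\binom{m+2n}{m}\tfrac{\pi^{2m+4n}}{(2n+1)(2m+4n+1)!}\,x^{2m}y^{4n}$. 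Thus Theorem~\ref{thm1} is equivalent to the single identity
\[
G(x,y)=\prod_{k\ge1}\Bigl(1+\frac{x^{2}}{k^{2}}+\frac{y^{4}}{4k^{4}}\Bigr),
\]
and this comparison of coefficients is the only routine ingredient.

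To establish it, recall $\zeta(k_{1},\dots,k_{r})=\int_{1>t_{1}>\cdots>t_{w}>0}\omega_{1}(t_{1})\cdots\omega_{w}(t_{w})$ with $\omega_{1}\cdots\omega_{w}=x^{k_{1}-1}y\cdots x^{k_{r}-1}y$, $x\leftrightarrow dt/t$, $y\leftrightarrow dt/(1-t)$; a block $\{2\}_{j}$ contributes $(xy)^{j}$, a $3$ contributes $xxy$, and a $1$ contributes $y$, so each summand in Theorem~\ref{thm1} is the integral of the word $(xy)^{j_{0}}\,xxy\,(xy)^{j_{1}}\,y\cdots xxy\,(xy)^{j_{2n-1}}\,y\,(xy)^{j_{2n}}$. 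Let $\widetilde G(t,x,y)$ be the generating function built the same way but with the truncated integrals $\int_{t>t_{1}>\cdots>0}$, so that $\widetilde G(1,x,y)=G(x,y)$ and, for $n=0$, $\widetilde G(t,x,0)=\sum_{j\ge0}x^{2j}\mathrm{Li}_{\{2\}_{j}}(t)={}_2F_{1}(ix,-ix;1;t)$, the solution of $t(1-t)F''+(1-t)F'-x^{2}F=0$ holomorphic at $t=0$ with $F(0)=1$. The word shape is self-similar — deleting the leading block $(xy)^{j_{0}}\,xxy\,(xy)^{j_{1}}\,y$ leaves a shape-word with one fewer $3,1$ pair — and translating this recursion into iterated integrals, then summing over all insertions, produces a Volterra equation $\widetilde G(t,\cdot)={}_2F_{1}(ix,-ix;1;t)+y^{4}\int_{0}^{t}\kappa(r,t,x)\,\widetilde G(r,\cdot)\,\frac{dr}{1-r}$ whose kernel $\kappa$ is again assembled from $_2F_{1}$'s. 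The heart of the matter is to prove that its unique solution is
\[
\widetilde G(t,x,y)={}_2F_{1}\!\bigl(i\sqrt a,-i\sqrt a;1;t\bigr)\cdot{}_2F_{1}\!\bigl(i\sqrt b,-i\sqrt b;1;t\bigr).
\]
Granting this, set $t=1$: by Gauss's theorem ${}_2F_{1}(i\sqrt c,-i\sqrt c;1;1)=\Gamma(1+i\sqrt c)^{-1}\Gamma(1-i\sqrt c)^{-1}=\sinh(\pi\sqrt c)/(\pi\sqrt c)=\prod_{k\ge1}(1+c/k^{2})$, so $G(x,y)=\prod_{k\ge1}(1+a/k^{2})(1+b/k^{2})=\prod_{k\ge1}(1+x^{2}/k^{2}+y^{4}/(4k^{4}))$, which is the required identity.

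The step I expect to be the genuine obstacle is the last claim about $\widetilde G$: one must show that the product of two hypergeometric functions solves the Volterra equation — the natural tools being the hypergeometric ODE and a Clausen-type identity for products of $_2F_{1}$'s — while keeping careful track of the leading block $\{2\}_{j_{0}}$, which, unlike the remaining $2n$ gaps, floats past every $3,1$ pair and therefore couples every pair to every $\{2\}$-insertion. This is exactly the mechanism that produces the factor $1/(2n+1)$, through the integrations $\int_{0}^{1}(\cdots)\,dr/(1-r)$, just as in the special case $\zeta(\{3,1\}_{n})=\tfrac{1}{2n+1}\zeta(\{2\}_{2n})$. Should the direct verification prove awkward, a fallback is an induction on $n$, peeling off one $3,1$ pair at a time and using the seed identity $\zeta(3,1)=\tfrac14\zeta(4)$ together with the shuffle product, at the price of heavier bookkeeping for the floating block.
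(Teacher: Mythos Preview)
Your route is genuinely different from the paper's. The paper never leaves the shuffle algebra: it proves by a two-variable induction the word identity
\[
z_{2}^{n}\ \sh\ z_{2}^{N}=\sum_{k=0}^{n}4^{k}\binom{N+n-2k}{n-k}\bigl\{z_{2}^{N+n-2k}\sw(z_{3}z_{1})^{k}\bigr\},
\]
applies $Z$ (using $Z(w_{1}\,\sh\,w_{2})=Z(w_{1})Z(w_{2})$ and the known $Z(z_{2}^{j})=\pi^{2j}/(2j+1)!$), and inverts the resulting triangular system with a single binomial identity. No iterated integrals, no hypergeometric functions, no generating functions. Your plan---package everything into $G(x,y)$, recognise the target as a Weierstrass product, and prove the polylogarithmic generating function factors as a product of two ${}_{2}F_{1}$'s---is in spirit the BBBL route to Zagier's $\{3,1\}$ formula and is close to the original Bowman--Bradley argument that the present paper is explicitly shortening.

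The gap is exactly where you locate it, but it is larger than your write-up acknowledges. The claimed factorization $\widetilde G(t,x,y)={}_{2}F_{1}(i\sqrt a,-i\sqrt a;1;t)\,{}_{2}F_{1}(i\sqrt b,-i\sqrt b;1;t)$ is not a lemma on the way to Theorem~\ref{thm1}; at $t=1$ it is \emph{equivalent} to Theorem~\ref{thm1} (your own computation of the right-hand generating function shows this), and the $t$-deformed statement is strictly stronger. You write down neither the Volterra kernel $\kappa$ nor any verification that the product solves the equation, and ``Clausen-type identity'' is only a hint: the standard Clausen formula has parameters $(a,b;a+b+\tfrac12)$, not $(c,-c;1)$, so some other quadratic relation or a direct ODE check is required. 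More structurally, the $\sw$-sum you are generating (inserting $z_{2}$'s into the $2n+1$ slots of $(z_{3}z_{1})^{n}$ with each $z_{k}$ treated as an atom) is \emph{not} a shuffle product, so there is no a~priori reason its polylog generating function should factor; establishing that it does is precisely the combinatorial content of the paper's Proposition~\ref{prop2}. Your stated fallback---peel off one $3,1$ pair and induct on $n$ via shuffle---\emph{is} the paper's proof, carried out there in the shuffle algebra in under a page.
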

In this article, we provide a short and simple proof of Theorem \ref{thm1} which 
refines the proof of Theorem 5.1 in [BB]. 

\section{Algebraic setup}
We summarize the algebraic setup of MZVs introduced by Hoffman (cf. [H2], [IKZ]). 
Let $ \mathfrak{H} = \mathbb{Q} \left\langle x,y \right\rangle$ be 
the noncommutative polynomial ring in two indeterminates $x$, $y$ and 
$\mathfrak{H}^1$ and $\mathfrak{H}^0$ its subrings 
$\mathbb{Q} + \mathfrak{H}y$ and $\mathbb{Q} + x \mathfrak{H} y$. 
We set $z_{k} = x^{k-1} y$ $(k = 1,2,3,\ldots)$. Then $\mathfrak{H}^1$ 
is freely generated by $\{z_{k}\}_{k \geq 1}$. 

We define the $\mathbb{Q}$-linear map (called evaluation map) 
$Z:\mathfrak{H}^0 \longrightarrow \mathbb{R}$ by
\[Z(1)=1 \;\; \mathrm{and} \;\; 
Z( z_{k_{1}} z_{k_{2}} \cdots z_{k_{n}}) = \zeta( k_{1}, k_{2}, \ldots, k_{n}).\]
We next define the shuffle product $\sh$ on $\mathfrak{H}$ inductively by
\begin{eqnarray*}
1 \sh w &=& w \sh 1 \;=\; w , \\
u_{1} w_{1} \sh u_{2} w_{2} 
&=& u_{1} (w_{1} \sh u_{2} w_{2}) + u_{2} (u_{1} w_{1} \sh w_{2})  
\end{eqnarray*}
($u_{1},u_{2} \in \{ x,y \}$ and $w$, $w_{1}$, $w_{2}$ are words in $\mathfrak{H}$), 
together with $\mathbb{Q}$-bilinearity. 
The shuffle product $\sh$ is commutative and associative. 
For this product, we have 
\[ Z(w_{1} \sh w_{2}) = Z(w_{1}) Z(w_{2}) \]
for any $w_{1}, w_{2} \in \mathfrak{H}^0$. 

We also define the shuffle product $\sw$ 
on $\mathbb{Q} \left\langle z_{1},z_{2},\ldots \right\rangle$ inductively by
\begin{eqnarray*}
1 \sw w &=& w \sw 1 \;=\; w , \\
u_{1} w_{1} \sw u_{2} w_{2} 
&=& u_{1} (w_{1} \sw u_{2} w_{2}) + u_{2} (u_{1} w_{1} \sw w_{2})  
\end{eqnarray*}
($u_{1},u_{2} \in \{ z_{k} \}_{k \geq 1}$ 
and $w$, $w_{1}$, $w_{2}$ are words in 
$\mathbb{Q} \left\langle z_{1},z_{2},\ldots \right\rangle$), 
together with $\mathbb{Q}$-bilinearity.
For example, we have 
\begin{align*}
z_{m} \sw z_{n} &= z_{m} z_{n} + z_{n} z_{m}, \\
z_{m} \sw z_{n} z_{l} 
&= z_{m} z_{n} z_{l} + z_{n} z_{m} z_{l} + z_{n} z_{l} z_{m}. 
\end{align*}
Then Theorem \ref{thm1} can be restated as follows: 
\[Z \left( z_{2}^m \sw (z_{3} z_{1})^n  \right) 
= \binom{m+2n}{m} \frac{ \pi^{2m+4n} }{ (2n+1) \cdot (2m+4n+1)! } 
\quad \big( m,n \in \mathbb{Z}_{\geq 0} \big). \]

\section{Proof of Theorem \ref{thm1}}
We restate Proposition 4.1 and Proposition 4.2 of [BB] by using $\sw$ 
and prove them by induction. 
\begin{prop}
\label{prop2}
For integers $n$, $N$ which satisfy $0 \leq n \leq N$, we have 
\begin{align}
z_{2}^n \sh z_{2}^N 
&= \sum_{k=0}^{n} 4^k \binom{N+n-2k}{n-k} 
\left\{ z_{2}^{N+n-2k} \sw ( z_{3} z_{1} )^k \right\}, 
\label{eq:1} \\
z_{1} z_{2}^n \sh z_{1} z_{2}^N 
&= 2 \sum_{k=0}^{n} 4^k \binom{N+n-2k}{n-k} 
z_{1} \left\{ z_{2}^{N+n-2k} \sw z_{1} ( z_{3} z_{1} )^k \right\}. 
\label{eq:2}
\end{align}
\end{prop}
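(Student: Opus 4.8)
The plan is to prove the two identities \eqref{eq:1} and \eqref{eq:2} \emph{simultaneously}, by induction on $n+N$. This is a purely algebraic statement about $\mathfrak{H}$ — no evaluation of $\zeta$-values is involved — so the only ingredients will be the recursive definitions of $\sh$ and $\sw$, the symmetry of both sides under $n\leftrightarrow N$ (which on the right-hand sides follows from $\binom{a}{b}=\binom{a}{a-b}$), and Pascal's rule $\binom{a-1}{b-1}+\binom{a-1}{b}=\binom{a}{b}$. The boundary cases — $n+N=0$, and $n=0$ with $N\ge1$ — are routine direct verifications; for \eqref{eq:1} they say $z_2^N=z_2^N$, and for \eqref{eq:2} at $n=0$ one peels off a leading $z_1$ and uses the inductive hypothesis together with $z_2^N\sw z_1=\sum_{j=0}^N z_2^jz_1z_2^{N-j}$. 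So from now on I take $n\ge1$ (hence $N\ge1$), and assume both identities already hold whenever the index sum is smaller.

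For \eqref{eq:1} I would first peel single letters off the front of $z_2^n\sh z_2^N$: writing $z_2=xy$, $z_3=x^2y$ and iterating the definition of $\sh$ a few times gives
\[ z_2^n\sh z_2^N \;=\; z_2\bigl(z_2^{n-1}\sh z_2^N\bigr)+z_2\bigl(z_2^n\sh z_2^{N-1}\bigr)+2\,x^2\bigl(z_1z_2^{n-1}\sh z_1z_2^{N-1}\bigr), \]
where I use $x^2z_1=z_3$. The essential observation is that the leftover $z_1z_2^{n-1}\sh z_1z_2^{N-1}$ has exactly the shape of the left side of \eqref{eq:2}, at index sum $n+N-2$, while the first two terms are instances of \eqref{eq:1} at index sum $n+N-1$; all three are thus covered by the inductive hypothesis. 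I then substitute the inductive expansions, combine the two sums coming from the $z_2(\cdots)$ terms via Pascal's rule, reindex $k\mapsto k-1$ in the sum coming from the $x^2(\cdots)$ term, and merge the two resulting families by means of
\[ z_2^m\sw(z_3z_1)^k \;=\; z_2\{z_2^{m-1}\sw(z_3z_1)^k\}+z_3\{z_2^m\sw z_1(z_3z_1)^{k-1}\}, \]
which is a single step of the recursion defining $\sw$. The outcome should be precisely the right-hand side of \eqref{eq:1}.

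For \eqref{eq:2} I peel off the two leading $z_1$'s from $z_1z_2^n\sh z_1z_2^N$, obtaining $z_1(z_2^n\sh z_1z_2^N)+z_1(z_1z_2^n\sh z_2^N)$, and in each ``mixed'' shuffle I pull off the leading $x$ of the pure $z_2$-power factor, using
\[ z_2^a\sh z_1z_2^b \;=\; x\bigl(z_1z_2^{a-1}\sh z_1z_2^b\bigr)+z_1\bigl(z_2^a\sh z_2^b\bigr)\qquad(a\ge1). \]
Feeding this back, and using $z_1xz_1=z_1z_2$, expresses $z_1z_2^n\sh z_1z_2^N$ in terms of $z_1z_2^{n-1}\sh z_1z_2^N$ and $z_1z_2^n\sh z_1z_2^{N-1}$ (index sum $n+N-1$) together with $z_2^n\sh z_2^N$, which has just been settled in the previous step. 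I then insert the known expansions, apply Pascal's rule, and use the companion relation $z_2^m\sw z_1(z_3z_1)^k=z_2\{z_2^{m-1}\sw z_1(z_3z_1)^k\}+z_1\{z_2^m\sw(z_3z_1)^k\}$ to rewrite each $z_1^2\{z_2^m\sw(z_3z_1)^k\}$ arising from $z_2^n\sh z_2^N$; the extraneous $z_1z_2\{\,\cdot\,\}$-terms produced this way should cancel exactly against those coming from the $z_1xz_1$-pieces, leaving the right-hand side of \eqref{eq:2}.

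I do not expect any individual step to be deep: both shuffle recursions above are forced by the definition of $\sh$, and on the right-hand sides the definition of $\sw$ together with Pascal's rule does all the work. The main obstacle — and the only place where real care is needed — will be the combinatorial bookkeeping: keeping track of exactly which $\sw$-words and which binomial coefficients survive after each round of ``peel, substitute, reindex'', so that the various families line up term by term and, in \eqref{eq:2}, the spurious terms cancel. A related, milder point is organizing the induction correctly, so that \eqref{eq:1} at a given index sum has been established before \eqref{eq:2} at that same index sum is attempted.
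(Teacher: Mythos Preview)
Your proposal is correct and follows essentially the same route as the paper: the shuffle decomposition you write for $z_2^n\sh z_2^N$ is exactly the one the paper uses, and the passage from the inductive data to the right-hand side via Pascal's rule and the one-step $\sw$-recursion $z_2^m\sw(z_3z_1)^k=z_2\{z_2^{m-1}\sw(z_3z_1)^k\}+z_3\{z_2^m\sw z_1(z_3z_1)^{k-1}\}$ matches the paper's computation line by line. The only organisational difference is that the paper runs a nested induction (outer on $n$, inner on $N$, with the base $N=n$ handled separately), whereas you run a single induction on $n+N$ and establish \eqref{eq:1} before \eqref{eq:2} at each level; both schemes provide precisely the hypotheses needed, so this is a cosmetic distinction rather than a substantive one.
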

\begin{proof}
We prove identities (\ref{eq:1}) and (\ref{eq:2}) 
simultaneously by induction on $n$. 
[Step 1] The case $n=0$ of (\ref{eq:1}) is clear. 
We can easily prove the case $n=0$ of (\ref{eq:2}) by induction on $N$. 
[Step 2] Suppose that (\ref{eq:1}) and (\ref{eq:2}) have been proven for $n-1$. 
We prove (\ref{eq:1}) for $n$ by induction on $N$. 
\begin{align*}
z_{2}^n \sh z_{2}^n 
&= 2 xy \{ (xy)^{n-1} \sh (xy)^n \} 
+ 2 x^2 \{ y (xy)^{n-1} \sh y (xy)^{n-1} \} 
\displaybreak[0]\\
&= 2 \sum_{k=0}^{n-1} 4^k \binom{2n-1-2k}{n-1-k} z_{2} 
\{ z_{2}^{2n-1-2k} \sw ( z_{3} z_{1} )^k \}   \\
& \quad + \sum_{k=0}^{n-1} 4^{k+1} \binom{2n-2-2k}{n-1-k} z_{3}
\{ z_{2}^{2n-2-2k} \sw z_{1} ( z_{3} z_{1} )^k \} 
\displaybreak[0]\\
&= \sum_{k=0}^{n-1} 4^k \binom{2n-2k}{n-k} z_{2} 
\{ z_{2}^{2n-1-2k} \sw ( z_{3} z_{1} )^k \} \\
& \quad + \sum_{k=1}^{n} 4^k \binom{2n-2k}{n-k} z_{3} 
\{ z_{2}^{2n-2k} \sw z_{1} ( z_{3} z_{1} )^{k-1} \} 
\displaybreak[0]\\
&= \binom{2n}{n} z_{2}^{2n} + \sum_{k=1}^{n-1} 4^k \binom{2n-2k}{n-k} 
\{ z_{2}^{2n-2k} \sw ( z_{3} z_{1} )^k \} + 4^n ( z_{3} z_{1} )^n 
\displaybreak[0]\\
&= \sum_{k=0}^{n} 4^k \binom{2n-2k}{n-k} 
\{ z_{2}^{2n-2k} \sw ( z_{3} z_{1} )^k \}.
\end{align*}
Hence (\ref{eq:1}) is true for $N=n$. 
Suppose that the case $N-1$ of (\ref{eq:1}) has been proven. 
(We may assume that $N-1 \geq n$ in the following calculation.)
\begin{align*}
z_{2}^n \sh z_{2}^N 
&= xy \{ (xy)^{n-1} \sh (xy)^N \} 
+ 2 x^2 \{ y (xy)^{n-1} \sh y (xy)^{N-1} \} \\
&\quad + xy \{ (xy)^{n} \sh (xy)^{N-1} \} 
\displaybreak[0]\\
&= \sum_{k=0}^{n-1} 4^k \binom{N+n-1-2k}{n-1-k} z_{2} 
\{ z_{2}^{N+n-1-2k} \sw ( z_{3} z_{1} )^k \} \\
& \quad + \sum_{k=0}^{n-1} 4^{k+1} \binom{N+n-2-2k}{n-1-k} z_{3} 
\{ z_{2}^{N+n-2-2k} \sw z_{1} ( z_{3} z_{1} )^k \} \\
& \quad + \sum_{k=0}^{n} 4^k \binom{N+n-1-2k}{n-k} z_{2} 
\{ z_{2}^{N+n-1-2k} \sw ( z_{3} z_{1} )^k \} 
\displaybreak[0]\\
&= \sum_{k=0}^{n-1} 4^k \binom{N+n-2k}{n-k} z_{2} 
\{ z_{2}^{N+n-1-2k} \sw ( z_{3} z_{1} )^k \} \\
& \quad + \sum_{k=1}^{n} 4^k \binom{N+n-2k}{n-k} z_{3} 
\{ z_{2}^{N+n-2k} \sw z_{1} ( z_{3} z_{1} )^{k-1} \} \\
&\quad + 4^n z_{2} \{ z_{2}^{N-n-1} \sw ( z_{3} z_{1} )^n \} 
\displaybreak[0]\\
&= \binom{N+n}{n} z_{2}^{N+n} 
+ \sum_{k=1}^{n-1} 4^k \binom{N+n-2k}{n-k} 
\{ z_{2}^{N+n-2k} \sw ( z_{3} z_{1} )^k \} \\
&\quad + 4^n \{ z_{2}^{N-n} \sw ( z_{3} z_{1} )^n \} 
\displaybreak[0]\\
&= \sum_{k=0}^{n} 4^k \binom{N+n-2k}{n-k} 
\{ z_{2}^{N+n-2k} \sw ( z_{3} z_{1} )^k \}. 
\end{align*}
Hence (\ref{eq:1}) is true for $N$. 
We can prove (\ref{eq:2}) for $n$ by induction on $N$ 
with using (\ref{eq:1}) for $n$. 
\end{proof}

Before proceeding the proof of Theorem \ref{thm1}, 
we prove a key identity. 
Comparing coefficients of 
$ (x+1)^{2m+4n+2} = (x^2 + 2x + 1)^{m+2n+1}$, we have 
\[ \binom{2m+4n+2}{2n+1} = \sum_{k=0}^{n} 2^{2k+1} 
\frac{ (m+2n+1)! }{ (n-k)! (2k+1)! (m+n-k)! }. \]
We can transform this identity as follows: 
\begin{align}
\frac{1}{(2n+1)!} \frac{1}{(2m+2n+1)!}  
&= \sum_{k=0}^{n} 4^k \binom{m+2n-2k}{n-k} \binom{m+2n}{2k} 
\frac{ 1 }{ (2k+1) \cdot (2m+4n+1)! }. 
\label{eq:3}
\end{align}

\begin{proof}[Proof of Theorem \ref{thm1}]
We prove Theorem \ref{thm1} by induction on $n$. 
The case $n=0$ is well known as has been mentioned in Section 1. 
Suppose that the assertion has been proven up to $n-1$. 
Putting $N=m+n$ in (\ref{eq:1}), we have 
\begin{align*}
\lefteqn{ 4^n Z \left( z_{2}^m \sw (z_{3} z_{1})^n \right) } \\
&= \frac{ \pi^{2n} }{ (2n+1)! } \frac{ \pi^{2m+2n} }{ (2m+2n+1)! }
- \sum_{k=0}^{n-1} 4^k \binom{m+2n-2k}{n-k} \binom{m+2n}{2k} 
\frac{ \pi^{2m+4n} }{ (2k+1) \cdot (2m+4n+1)! } 
\displaybreak[0]\\
&\stackrel{(\ref{eq:3})}{=} 
4^n \binom{m+2n}{m} \frac{ \pi^{2m+4n} }{ (2n+1) \cdot (2m+4n+1)! }. 
\end{align*}
This completes the proof of Theorem \ref{thm1}. 
\end{proof}

\vspace{5mm}
\textsc{Graduate School of Mathematics, Kyushu University}

\textsc{Fukuoka 812-8581, Japan}

\textit{E-mail address}: muneta@math.kyushu-u.ac.jp

\end{document}